\documentclass[a4paper,fleqn]{cas-dc}

\usepackage[numbers,sort&compress]{natbib}
\usepackage{amsmath,amssymb,amsthm,mathtools,bm}
\usepackage{thmtools}
\usepackage{booktabs}
\usepackage{algorithm}
\usepackage{algpseudocode}
\usepackage[nameinlink,noabbrev]{cleveref}
\usepackage{optprog}

\newtheorem{theorem}{Theorem}
\newtheorem{lemma}[theorem]{Lemma}
\newtheorem{proposition}[theorem]{Proposition}
\newtheorem{observation}[theorem]{Observation}

\theoremstyle{definition}
\newtheorem{definition}[theorem]{Definition}
\newtheorem{remark}[theorem]{Remark}

\newcommand{\R}{\mathbb{R}}

\renewcommand{\L}{\mathcal{L}}

\begin{document}

\shorttitle{Max-$k$-Cut via Node Features}
\shortauthors{Bhardwaj, Gogoi and Narayanan}

\title[mode=title]{Max-$k$-Cut via Node Features}

\author[1]{Avinash Bhardwaj}[orcid=0000-0002-9690-1705]
\author[1]{Hritiz Gogoi}[orcid=0009-0005-0216-1232]
\author[1]{Vishnu Narayanan}


\affiliation[1]{
  organization={Department of Industrial Engineering and Operations Research,
  Indian Institute of Technology Bombay},
  addressline={Powai},
  city={Mumbai},
  postcode={400076},
  state={Maharashtra},
  country={India}
}

\begin{abstract}
We study the Max-$k$-Cut problem from a node-feature perspective, where each
vertex is associated with a feature vector and edge weights are given by pairwise
inner products. We first examine the semidefinite relaxation of Max-$k$-Cut from
this perspective. Using a normal-cone argument, we derive a general sufficient
condition for exactness of the Frieze--Jerrum relaxation and show that it is
satisfied in two feature-structural regimes: perfect feature balance, where the
aggregate feature vectors of the parts are equal, and feature dominance, where
a small set of large nonnegative feature vectors determines the structure of an
optimal partition. We then show that the Max-$k$-Cut objective is equivalent to minimizing the sum
of squared norms of the aggregate feature vectors assigned to the $k$ parts, thereby connecting the problem to vector balancing. Motivated by this observation, we show that a greedy feature-balancing algorithm retains the classical $1-1/k$
worst-case approximation guarantee and recovers an optimal partition under
feature dominance. For rank-$1$ feature graphs with nonnegative
features, classical bounds of Chandra and Wong for greedy load balancing yield
a computable \emph{a posteriori} optimality-gap certificate that depends only on
the returned partition and requires no knowledge of the optimum.
\end{abstract}

\begin{keywords}
Max-$k$-Cut \sep semidefinite programming \sep
vector balancing \sep approximation algorithms
\end{keywords}

\maketitle
\section{Introduction}

The \emph{Max-$k$-Cut} problem is a natural generalization of the classical Max-Cut problem. Given an undirected weighted graph $G([n],E,w)$, the goal is to partition the vertex set $[n]$ into at most $k < n$ disjoint subsets so as to maximize the total weight of edges whose endpoints lie in different subsets. Since Max-Cut, which is known to be NP-hard \cite{karp1972reducibility}, arises as the special case $k=2$, it follows by a straightforward polynomial-time reduction that Max-$k$-Cut is NP-hard for all $k \ge 2$ \cite{frieze1997improved}.

In this manuscript, we develop a node feature-centric view of the Max-$k$-Cut problem. In particular, we associate to each node (or vertex) $i \in [n]$ a vector $f_i \in \R^r$, referred to as its \emph{feature vector}, and consider edge weights of the form
\[
w_{ij} := f_i^\top f_j, \qquad \forall i,j \in [n],\ i \neq j.
\]

The feature-based representation connects the Max-$k$-Cut objective with several graph-based models used in data analysis and machine learning. In many such models, interactions between vertices are determined by underlying attributes or latent representations associated with the nodes \cite{scholkopf1998nonlinear}, providing a natural framework for constructing weighted similarity graphs. A particularly related example is Node-Max-Cut, where each vertex carries a scalar weight and the weight of an edge is given by the product of the weights of its endpoints. This model has been studied, among other applications, in connection with congestion games \cite{fotakis2020node}. The rank-1 feature model considered here recovers this multiplicative weight structure, while our framework extends it to vector-valued features and to the Max-$k$-Cut setting. Max-$k$-Cut has also recently been studied as a clustering method \cite{felzenszwalb2022clustering}. Under the feature-based model considered here, this viewpoint admits a natural interpretation: vertices with highly aligned feature vectors have large edge weights and are therefore encouraged to lie in different clusters. Thus, the resulting objective may be viewed as a form of repulsive clustering.
 
Random intersection graph models, where edges arise through shared attributes \cite{karonski1999random}, can also be viewed from the feature-based perspective. In particular, weighted random intersection graphs have weight matrices of the form $R^\top R$, where $R$ is a random incidence matrix, and the Max-Cut problem on such graphs has been recently initiated \cite{nikoletseas2023max}. Such models appear in several settings in network analysis \cite{bloznelis2015recent}, further motivating the study of Max-$k$-Cut on graphs which admit node-feature representation. 

\subsection{Our Contributions}

In this manuscript, we examine how the node-feature representation interacts with the optimization structure of Max-$k$-Cut. Generalizing the SDP-exactness result of Laurent and Poljak~\cite{laurent1995positive} for Max-Cut, we first establish in \Cref{Sec3} a normal-cone certificate for exactness of the Frieze--Jerrum SDP relaxation of Max-$k$-Cut \cite{frieze1997improved}. Examining this certificate from a node-feature perspective yields two structural regimes under which the relaxation is exact: perfect feature balance, where the total feature vector splits evenly across parts, and feature dominance, where $k-1$ sufficiently large feature vectors force the structure of the optimal partition. Exactness under perfect feature balance holds for features of arbitrary sign; feature dominance, by contrast, is defined and necessarily holds only for non-negative features. Second, in \Cref{Sec4}, we observe that Max-$k$-Cut is equivalent to distributing the total feature mass evenly among the $k$ parts, and use this view to analyze a greedy feature-balancing algorithm, valid for non-negative edge weights, that assigns each vertex to the part with which it has minimum current feature affinity. The algorithm recovers an optimal partition whenever feature dominance holds, and otherwise retains the classical $(1-1/k)$-approximation guarantee shared by other greedy and local-search Max-$k$-Cut heuristics. Finally, for rank-1 feature graphs with non-negative weights, we sharpen this into a computable \textit{a posteriori} optimality-gap certificate: the worst-case constant of Chandra and Wong \cite{chandra1975worst} for greedy sum-of-squares partitioning gives $\mathrm{OPT}-\mathrm{ALG}\le\Phi(A^{\mathrm{ALG}})/50$, improving to $\Phi(A^{\mathrm{ALG}})/74$ when a perfectly balanced partition also exists. This bound depends only on the returned partition, requiring no knowledge of $\mathrm{OPT}$.

\section{Preliminaries and Notation}\label{prelims}

\subsection{The Frieze--Jerrum SDP relaxation}

We use the standard semidefinite relaxation for Max-$k$-Cut due to
Frieze and Jerrum~\cite{frieze1997improved}. For a weighted graph
$G([n],E,w)$ , the relaxation is

\begin{optprog*}\label{mkcut}
    \text{maximize } $z_{\mathsf{MkC\text{-}SDP}}$ =
     &\objective{\frac{k-1}{k}\sum_{i<j} w_{ij}\bigl(1 - X_{ij}\bigr)} \\
    (FJ-SDP) \text{subject to } 
    & X_{ii} &&= 1, \quad \forall i\in [n],\\
    & X_{ij} &&\ge -\frac{1}{k-1}, \quad \forall i\neq j,\\
    & X &&\succeq 0 .
\end{optprog*}

We denote its feasible region, also called the \textit{k-way elliptope}, by
\[
\mathcal L_{n,k}
:=
\left\{
X\in\mathbb S^n\ \middle|\
\begin{aligned}
&X_{ii}=1 && \forall i\in[n],\\
&X_{ij}\geq -\frac{1}{k-1}
&& \forall i\neq j,\\
&X\succeq0
\end{aligned}
\right\}.
\]
For $k=2$, this reduces to the classical Goemans--Williamson SDP relaxation
for Max-Cut~\cite{goemans1995improved}. The Frieze--Jerrum rounding scheme gives an approximation ratio
\[
\alpha_k
=
1-\frac1k+
O\left(\frac{\text{ln k}}{k^2}\right).
\]
Under the Unique Games Conjecture, this ratio is essentially optimal for
Max-$k$-Cut~\cite{khot2007optimal}.
If $\mathcal A=(A_1,\ldots,A_k)$ is a
$k$-partition, its associated \textit{k-partition matrix} $Z$ has entries
\[
Z_{ij}
=
\begin{cases}
1, & \text{if } i,j \text{ belong to the same part},\\[2mm]
-\dfrac{1}{k-1}, & \text{otherwise}.
\end{cases}
\]
For such a matrix,
\[
\frac{k-1}{k}(1-Z_{ij})
=
\begin{cases}
0, & \text{if } i,j \text{ are in the same part},\\
1, & \text{if } i,j \text{ are in different parts}.
\end{cases}
\]
Thus the SDP objective agrees with the Max-$k$-Cut objective on partition
matrices. We write $\mathcal Q_{n,k}$ for the set of all such partition matrices. 
Given a closed convex set \(\mathcal{C}\subseteq \mathbb{S}^n\) and a point
\(Z\in\mathcal{C}\), the normal cone of \(\mathcal{C}\) at \(Z\) is
\[
\mathcal N(\mathcal{C},Z)
:=
\{C\in\mathbb{S}^n:\langle C,Z\rangle\ge \langle C,X\rangle
\text{ for all } X\in\mathcal{C}\}.
\]
In particular, for \(Z\in\mathcal{L}_{n,k}\), we write
\(\mathcal N(\mathcal{L}_{n,k},Z)\) for the normal cone of the \(k\)-way elliptope at
\(Z\). An extreme point $v_0$ of $C$ is called a vertex of $C$ if its normal cone at $v_0$ is full dimensional.
We say that the Max-\(k\)-Cut SDP is exact on a graph \(G\) if the optimal value of FJ-SDP is equal
to the optimal value of the Max-\(k\)-Cut problem on \(G\). Let $\operatorname{OPT}$ denote the optimal Max-$k$-Cut value of the instance. For a
partition $\mathcal A=(A_1,\ldots,A_k)$ of $[n]$, we write
\[
\operatorname{Cut}(\mathcal A)
=
\sum_{\ell<m}\sum_{i\in A_\ell,\ j\in A_m}w_{ij}
\]
for its cut value. 

\subsection{Notations and Definitions}

$\mathbb{R}^n$ and $\mathbb{R}_{+}^n$ denote the sets of $n$-dimensional vectors with real and non-negative real entries, respectively. For a positive integer \(n\), we write $[n]:=\{1,\ldots,n\}$. Let \(\mathbb{S}^n\) denote the space of \(n\times n\) real symmetric matrices,
and let \(\mathbb{S}^n_+\) denote the cone of positive semidefinite matrices.
We write \(X\succeq 0\) to mean \(X\in\mathbb{S}^n_+\). The space of
\(n\times n\) diagonal matrices is denoted by \(\mathrm{DIAG}_n\). For
matrices \(A,B\in\mathbb{S}^n\), we use the trace inner product $\langle A,B\rangle:=\operatorname{tr}(AB)$. The \(i\)-th standard basis vector in \(\mathbb{R}^n\) is denoted by \(e_i\). $\|v \|$ denotes the Euclidean norm of a given vector $v$.

Let \(G([n],E,w)\) be a weighted graph on $n$ vertices with edge weights
\(w_{ij}\in \R\) on the edge set $E$. 
Let \(W\in\mathbb{S}^n\) denote the \textit{weight matrix} of \(G\), defined by
\[
W_{ij}=
\begin{cases}
w_{ij}, & \text{if } ij\in E,\\
0, & \text{otherwise},
\end{cases}
\qquad\text{and}\qquad W_{ii}=0 \ \text{for all } i\in[n].
\] 
A matrix $F=[f_1,\ldots,f_n]\in\mathbb{R}^{r\times n}$
is called a feature matrix associated with a graph \(G\) if its columns
\(f_1,\ldots,f_n\in\mathbb{R}^r\) satisfy $w_{ij}=f_i^\top f_j$ for every $i\neq j$. The graph is called a rank-\(r\) feature
graph if \(r\) is the smallest positive integer for which such a feature matrix exists. For vectors \(x,y\in\mathbb{R}^r\), we write $x\ge_{\mathrm{cw}} y$ if \(x_p\ge y_p\) for every coordinate \(p\in[r]\).
For $i<j$, let $E^{ij}:=\frac{1}{2}(e_i e_j^\top+e_j e_i^\top)$,
so that $\langle E^{ij},X\rangle=X_{ij}$ for every $X \in \mathbb{S}^n$.

\section{Feature-Structural Conditions for SDP Exactness}\label{Sec3}
The exactness of semidefinite relaxations has been studied extensively for the Max-Cut problem. For $k=2$, the Frieze--Jerrum (FJ) relaxation reduces to the Goemans--Williamson SDP, and its exactness is equivalent to the tightness of the corresponding eigenvalue bound. Several sufficient conditions and graph classes for which this relaxation is exact are well documented; see \cite{bhardwaj2025exactness} for a survey. In contrast, for Max-$k$-Cut with $k \geq 3$, the literature concerning SDP exactness remains sparse. For instance, van Dam and Sotirov \cite{van2016new} derived eigenvalue bounds for Max-$k$-Cut---one of which coincides with the FJ relaxation---and identified classes of unweighted graphs for which these bounds are tight.
In this section, we investigate how node-feature representations can certify exactness. This analysis yields sufficient feature-structural conditions for SDP exactness on weighted Max-$k$-Cut instances. To this end, we generalize the $k=2$ exactness results of Laurent and Poljak \cite{laurent1995positive} on rank-1 feature graphs to higher-dimensional feature representations and $k\geq3$. We first introduce two structural notions and examine how they interact with the semidefinite relaxation.

\begin{definition}[Perfect feature balance]
Given a feature matrix $F=[f_1,\ldots,f_n]\in\mathbb{R}^{r\times n}$, a partition $(A_1,\ldots,A_k)$ of $[n]$ is called perfect feature balanced if
\[
\sum_{i\in A_1}f_i
=
\sum_{i\in A_2}f_i
=
\cdots
=
\sum_{i\in A_k}f_i.
\]
\end{definition}

We next introduce a complementary
condition, feature dominance, which captures a regime where the structure of an
optimal partition is forced by a small set of dominant feature vectors.

\begin{definition}[Feature dominance]
Given a feature matrix $F=[f_1,\ldots,f_n]\in\mathbb{R}_+^{r\times n}$,
a set $I\subset[n]$ with $|I|=k-1$ is called feature-dominating if
\[
f_i \geq_{\mathrm{cw}} \sum_{j\notin I} f_j,
\qquad \text{for every } i\in I.
\]
We say that the instance satisfies feature dominance if such a set exists.
\end{definition}

The following observation is from \cite{felzenszwalb2022clustering}, where a vertex of the $k$-way elliptope is identified with a $k$-partition matrix.

\begin{observation}
The vertices of $\L_{n,k}$ are the $k$-partition matrices.
\end{observation}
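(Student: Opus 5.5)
The plan is to prove the two inclusions separately: every $k$-partition matrix is a vertex of $\L_{n,k}$, and every vertex of $\L_{n,k}$ is a $k$-partition matrix. Throughout, two facts are used repeatedly. First, every $X\in\L_{n,k}$ satisfies $-\frac{1}{k-1}\le X_{ij}\le 1$; the upper bound holds because $X\succeq 0$ and $X_{ii}=1$. Second, every $D\in\mathrm{DIAG}_n$ satisfies $\langle D,X\rangle=\operatorname{tr}D$ on all of $\L_{n,k}$, so $\pm D$ lie in every normal cone.

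\emph{Partition matrices are vertices.} Let $Z\in\mathcal Q_{n,k}$ come from the partition $(A_1,\dots,A_k)$. First note $Z\in\L_{n,k}$. Indeed, assign to each part $A_\ell$ one of $k$ unit vectors forming a regular simplex, so that distinct vectors have pairwise inner products $-1/(k-1)$; then $Z$ is the Gram matrix of the vectors assigned to the vertices. Next, set
\[
C:=\sum_{i<j,\ \text{same part}}E^{ij}-\sum_{i<j,\ \text{different parts}}E^{ij}.
\]
For any $X\in\L_{n,k}$, the two bounds above give
\[
\langle C,Z-X\rangle=\sum_{\text{same}}(1-X_{ij})+\sum_{\text{diff}}\Bigl(X_{ij}+\tfrac1{k-1}\Bigr)=\sum_{i<j}|Z_{ij}-X_{ij}|.
\]
Now take any $H=D+\sum_{i<j}h_{ij}E^{ij}$ with $D$ diagonal and $|h_{ij}|<1$. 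Then
\[
\langle C+H,Z-X\rangle\ \ge\ \sum_{i<j}(1-|h_{ij}|)\,|Z_{ij}-X_{ij}|\ \ge\ 0 .
\]
Hence the open set $\{C+H\}$ lies in $\mathcal N(\L_{n,k},Z)$, so the normal cone is full dimensional. Moreover $Z$ is the unique maximizer of $\langle C,\cdot\rangle$, so $Z$ is an exposed, and hence extreme, point. Therefore $Z$ is a vertex.

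\emph{Vertices are partition matrices.} Let $X$ be a vertex. I would use the standard description of the normal cone of a spectrahedron with linear constraints. The normal cone at $X$ is contained in
\[
\mathrm{DIAG}_n+\operatorname{cone}\{-E^{ij}: X_{ij}=-\tfrac1{k-1}\}-\{P\succeq0:\ PX=0\}.
\]
The reason is that a functional maximized at $X$ can only use diagonal multipliers, the multipliers of the active lower-bound constraints, and PSD matrices supported on $\ker X$. Let $r=\operatorname{rank}X$ and let $a$ be the number of active lower-bound entries. Full dimensionality then forces
\[
n+a+\frac{(n-r)(n-r+1)}{2}\ \ge\ \frac{n(n+1)}{2}.
\]
Next, reduce $X$. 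Write $X$ as the Gram matrix of unit vectors $v_1,\dots,v_n$. Since $X_{ij}=1$ iff $v_i=v_j$, the relation $X_{ij}=1$ is an equivalence relation. Collapse its $m$ classes, with sizes $s_1,\dots,s_m$, to distinct vectors $u_1,\dots,u_m$, which span dimension $r$. Count the entries of $X$: the $\sum_t\binom{s_t}{2}$ entries equal to $1$ correspond exactly to the rank deficiency inside classes, while $a$ is at most the number of cross-class pairs, and the inequality above must hold.

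I expect this counting step to be the main obstacle. The goal is to show it forces every pair of distinct classes to satisfy $u_s^\top u_t=-1/(k-1)$. If some cross-class inner product lay strictly inside $\bigl(-\tfrac1{k-1},1\bigr)$, I would first try a direct perturbation argument instead of pure counting. One would exhibit a nonzero direction $\Delta$ at $X$ that is tangent to $\L_{n,k}$ in both directions $\pm\Delta$. Such a $\Delta$ would move that entry, keep the diagonal and active entries fixed, and keep the range inside $\operatorname{range}X$, for example $\Delta=V^\top S V$ for a suitable symmetric $S$ on the span. Then every $C$ in the normal cone satisfies $\langle C,\Delta\rangle=0$, so the normal cone lies in a hyperplane, which contradicts full dimensionality.

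Once all cross-class entries equal $-1/(k-1)$, $X$ is the partition matrix of the $m$ classes, and it remains to check $m\le k$. This follows from
\[
0\le\Bigl\|\sum_t u_t\Bigr\|^2=m-\frac{m(m-1)}{k-1},
\]
which forces $m\le k$. Hence $X\in\mathcal Q_{n,k}$.
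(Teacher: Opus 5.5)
Your first half is correct and complete. The functional $C$ satisfies $\langle C,Z-X\rangle=\sum_{i<j}|Z_{ij}-X_{ij}|$, the open set $\{C+H\}$ lies in the normal cone, and $Z$ is exposed. Your normal-cone formula for the converse is also fine, since $I$ is a Slater point. Note that the paper itself proves neither direction; it simply defers to Felzenszwalb--Klivans--Paul.

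The gap is in the converse, and it is the entire content of that direction. You derive only the necessary inequality $n+a+\binom{n-r+1}{2}\ge\binom{n+1}{2}$, and then say you \emph{would try} a perturbation. You never show that a cross-class entry in $(-\tfrac{1}{k-1},1)$ forces a two-sided tangent direction, and neither of your two tools can supply it.

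\emph{Counting cannot close the gap.} Take $k=4$ and let $X=\tfrac12(Z_{\mathrm{row}}+Z_{\mathrm{col}})$ be the midpoint of the row- and column-partition matrices of a $4\times4$ grid, so $n=16$. Its entries are $1,\tfrac13,-\tfrac13$, with $r=6$ and $a=72$. Then $16+72+\binom{11}{2}=143\ge 136=\binom{17}{2}$, yet $X$ is not a partition matrix, and not even extreme.

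\emph{Your ansatz $\Delta=V^\top SV$ also fails in general.} This ansatz keeps the range inside $\operatorname{range}X$. Take $k=3$ and $v_1,v_2,v_3\in\R^2$ at angles $0,\pi/3,2\pi/3$. The conditions $v_i^\top S v_i=0$ already force $S=0$. Yet this $X$ is not a vertex: $\Delta=E^{12}-E^{23}$ lies in the lineality space (it annihilates the kernel form along $q=(1,-1,1)$), and its range leaves $\operatorname{range}X$.

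\emph{What actually has to be proved.} Write $X=V^\top V$ with $V\in\R^{r\times n}$ of full row rank. The lineality space of the tangent cone is then
\[
\{V^\top Y+Y^\top V:\ Y\in\R^{r\times n},\ \text{zero diagonal, zero on } \mathcal A(X)\},
\]
with $Y$ arbitrary, not of the form $SV$. So $X$ is a vertex iff the spherical bar framework $(v_i)$ with bars $\mathcal A(X)$ is infinitesimally rigid within its span. You need a genuine rigidity argument for the following: such a framework, whose bars all sit at the extremal inner product $-1/(k-1)$ while every other pair of distinct points is strictly larger, can be rigid only when every pair of distinct points is a bar. Without that argument, the second inclusion remains unproved.
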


The following certificate gives a general sufficient condition for Max-$k$-Cut SDP exactness.

\begin{theorem}[Normal-cone exactness certificate]\label{thm1}
Let $G$ be a weighted graph with weight matrix $W$, where
$W_{ij}=w_{ij}$ for $i\neq j$, $0$ otherwise. Let $Z\in\mathcal Q_{n,k}$, and define the active set
\[
\mathcal A(Z)
=
\left\{
(i,j): i<j,\; Z_{ij}= -\frac{1}{k-1}
\right\}.
\]
Suppose there exist $D\in\mathrm{DIAG}_n$, $M\succeq 0$, and scalars $\lambda_{ij}\geq 0$ for
$(i,j)\in\mathcal A(Z)$ such that
\[
-W
=
D-M-\sum_{(i,j)\in\mathcal A(Z)}\lambda_{ij}E^{ij},
\]
and
\[
\langle M,Z\rangle=0.
\]
Then $Z$ is an optimal solution of the Max-$k$-Cut SDP. In particular, the
Max-$k$-Cut SDP on $G$ is exact.
\end{theorem}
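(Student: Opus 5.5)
The plan is to show that $-W$ lies in the normal cone $\mathcal N(\mathcal L_{n,k},Z)$ after a sign change, i.e.\ that $Z$ minimizes $\langle W,X\rangle$ over $\mathcal L_{n,k}$. This is equivalent to maximizing the SDP objective. Indeed, since $W$ is symmetric with zero diagonal, $\sum_{i<j} w_{ij}X_{ij}=\tfrac12\langle W,X\rangle$. Hence
\[
z_{\mathsf{MkC\text{-}SDP}}(X)=\frac{k-1}{k}\sum_{i<j}w_{ij}-\frac{k-1}{2k}\langle W,X\rangle ,
\]
and it suffices to prove $\langle -W,Z\rangle\ge\langle -W,X\rangle$ for every $X\in\mathcal L_{n,k}$.

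For arbitrary $X\in\mathcal L_{n,k}$, substitute the decomposition of $-W$ and compare term by term.

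\emph{Diagonal term.} Since $X_{ii}=Z_{ii}=1$, we have $\langle D,X\rangle=\operatorname{tr}D=\langle D,Z\rangle$.

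\emph{PSD term.} Since $M\succeq0$ and $X\succeq0$, $\langle M,X\rangle\ge0=\langle M,Z\rangle$, so $-\langle M,X\rangle\le-\langle M,Z\rangle$.

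\emph{Active-set term.} For $(i,j)\in\mathcal A(Z)$ we have $\langle E^{ij},X\rangle=X_{ij}\ge-\tfrac1{k-1}=Z_{ij}$. With $\lambda_{ij}\ge0$ this gives $-\sum\lambda_{ij}X_{ij}\le-\sum\lambda_{ij}Z_{ij}$.

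Summing the three comparisons yields $\langle -W,X\rangle\le\langle -W,Z\rangle$, so $-W\in\mathcal N(\mathcal L_{n,k},Z)$ and $Z$ is optimal for FJ-SDP.

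For exactness, note that $Z\in\mathcal Q_{n,k}$ is the partition matrix of some $k$-partition $\mathcal A$. By the identity $\tfrac{k-1}{k}(1-Z_{ij})\in\{0,1\}$ recorded in the preliminaries, the SDP value at $Z$ equals $\operatorname{Cut}(\mathcal A)$. This gives $z_{\mathsf{MkC\text{-}SDP}}=\operatorname{Cut}(\mathcal A)\le\operatorname{OPT}$. Conversely, every partition matrix is feasible for FJ-SDP, so $\operatorname{OPT}\le z_{\mathsf{MkC\text{-}SDP}}$. The two values are therefore equal.

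No step is genuinely hard; this is weak duality or KKT sufficiency. The main points needing care are bookkeeping:
\begin{itemize}
\item the factor $\tfrac12$ from $E^{ij}$ and the sum over $i<j$ versus the full trace inner product;
\item verifying that the sign conventions make each of the three inequalities go the right way, in particular that $\lambda_{ij}$ enter with a minus sign and act only on coordinates where the lower bound is tight;
\item checking that $D$ contributes equally to both sides because of the unit-diagonal constraint.
\end{itemize}
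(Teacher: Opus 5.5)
Your proposal is correct and follows essentially the same route as the paper. You substitute the decomposition of $-W$, cancel $D$ via the unit diagonal, use $M\succeq 0$ with $\langle M,Z\rangle=0$, and use the tight lower bounds on active pairs to get $-W\in\mathcal N(\mathcal L_{n,k},Z)$; your explicit affine rewriting of the objective in $\langle W,X\rangle$ and the two-sided argument $\operatorname{Cut}(\mathcal A)\le\operatorname{OPT}\le z_{\mathsf{MkC\text{-}SDP}}$ just spell out what the paper states in one line.
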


\begin{proof}
Let $Y\in\mathcal L_{n,k}$ be arbitrary. We show that
\[
\langle -W,Y-Z\rangle\leq 0.
\]
Using the assumed decomposition, we have
\[
\langle -W,Y-Z\rangle
=
\left\langle
D-M-\sum_{(i,j)\in\mathcal A(Z)}\lambda_{ij}E^{ij},
Y-Z
\right\rangle.
\]
Since every matrix in $\mathcal L_{n,k}$ has diagonal entries equal to one,
\[
\langle D,Y-Z\rangle=0.
\]
Therefore,
\[
\langle -W,Y-Z\rangle
=
\langle -M,Y-Z\rangle
+ \langle
-\sum_{(i,j)\in\mathcal A(Z)}
\lambda_{ij} E^{ij},Y-Z\rangle.
\]

Since $M\succeq 0$, $Y\succeq 0$, and $\langle M,Z\rangle=0$, we have
\[
\langle -M,Y-Z\rangle
=
\langle -M,Y\rangle+\langle M,Z\rangle
=
-\langle M,Y\rangle
\leq 0.
\]
Furthermore, for every $(i,j)\in\mathcal A(Z)$,
\[
Z_{ij}=-\frac{1}{k-1}.
\]
Since $Y\in\mathcal L_{n,k}$ satisfies $Y_{ij}\geq -1/(k-1)$, it follows that
\[
Y_{ij}-Z_{ij}\geq 0.
\]
By the definition of $E^{ij}$,
\[
\langle E^{ij},Y-Z\rangle=Y_{ij}-Z_{ij}\geq 0.
\]
It follows that,
\[
\langle -\sum_{(i,j)\in\mathcal A(Z)}
\lambda_{ij} E^{ij},Y-Z\rangle
\leq 0
\]
as $\lambda_{ij} \geq 0 \quad \forall i, j \in [n]$.
Combining the two inequalities yields
\[
\langle -W,Y-Z\rangle\leq 0
\qquad
\forall\,Y\in\mathcal L_{n,k}.
\]
Hence
\[
-W\in \mathcal N(\mathcal L_{n,k},Z).
\]

The Max-$k$-Cut SDP objective is, up to a positive scalar and an additive
constant, equivalent to maximizing $\langle -W,X\rangle$ over
$X\in\mathcal L_{n,k}$. Therefore, the normal-cone condition implies that
$Z$ is an optimal solution of the SDP. Since $Z$ is a $k$-partition matrix,
the SDP optimum is attained by an integral feasible solution. Hence the
Max-$k$-Cut SDP is exact.
\end{proof}

In the following, we use the normal-cone characterization developed above to provide sufficient conditions under
which the feature structure certifies exactness of the Max-$k$-Cut SDP relaxation. 

\begin{lemma}\label{lem2}
For any $Z\in \L_{n,k}$,
\[
\mathcal{N}(\L_{n,k},Z)
\supseteq
\left\{
D-M:
D\in \mathrm{DIAG}_n,\;
M\succeq 0,\;
\langle M,Z\rangle=0
\right\}.
\]
\end{lemma}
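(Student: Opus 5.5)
The plan is to verify the normal-cone inequality directly from the definition, following the same computation as in the proof of \Cref{thm1} but without the active-set terms. Fix $D\in\mathrm{DIAG}_n$ and $M\succeq 0$ with $\langle M,Z\rangle=0$, and set $C=D-M$. The goal is to show that $\langle C,Z\rangle\ge\langle C,X\rangle$ for every $X\in\L_{n,k}$, or equivalently that $\langle C,X-Z\rangle\le 0$.

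The first step handles the diagonal part. Since $D$ is diagonal, $\langle D,X\rangle=\sum_i D_{ii}X_{ii}$. Every matrix in $\L_{n,k}$ has unit diagonal, and this applies to both $X$ and $Z$. Hence $\langle D,X\rangle=\operatorname{tr}(D)=\langle D,Z\rangle$, so $\langle D,X-Z\rangle=0$.

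The second step handles the semidefinite part. Using $\langle M,Z\rangle=0$, we get $\langle -M,X-Z\rangle=-\langle M,X\rangle$. Both $M$ and $X$ are positive semidefinite, so $\langle M,X\rangle\ge 0$. This follows from writing $\langle M,X\rangle=\operatorname{tr}(M^{1/2}XM^{1/2})\ge 0$. Adding the two steps gives $\langle C,X-Z\rangle\le 0$, and therefore $C\in\mathcal N(\L_{n,k},Z)$.

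No step is a genuine obstacle. The only fact that needs to be cited rather than read off from the definitions is the self-duality of the PSD cone, namely that the trace inner product of two PSD matrices is nonnegative. Note that the hypothesis $\langle M,Z\rangle=0$ with $Z\succeq0$ is always satisfiable, for example by $M=0$, so the set on the right-hand side is nonempty. The inclusion holds for an arbitrary $Z\in\L_{n,k}$, not only for partition matrices.
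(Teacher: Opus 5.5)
Your proof is correct and follows the paper's argument. The diagonal term vanishes because every matrix in $\mathcal{L}_{n,k}$ has unit diagonal. The hypothesis $\langle M,Z\rangle=0$, combined with $\langle M,X\rangle\ge 0$ for positive semidefinite $M$ and $X$, then gives $\langle D-M,X-Z\rangle=-\langle M,X\rangle\le 0$.
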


\begin{proof}
Let $C=D-M$, where $D\in \mathrm{DIAG}_n$, $M\succeq 0$, and
$\langle M,Z\rangle=0$. To show that $C\in \mathcal{N}(\L_{n,k},Z)$, it is
enough to prove that
\[
\langle C,Y-Z\rangle \le 0
\qquad
\forall\,Y\in \L_{n,k}.
\]
Since every matrix in $\L_{n,k}$ has diagonal entries equal to one, we have
\[
\langle D,Y-Z\rangle=0.
\]
Therefore,
\[
\langle C,Y-Z\rangle
=
\langle D-M,Y-Z\rangle
=
\langle M,Z\rangle-\langle M,Y\rangle.
\]
By assumption, $\langle M,Z\rangle=0$. Moreover, since $M\succeq 0$ and
$Y\succeq 0$, we have $\langle M,Y\rangle\ge 0$. Hence
\[
\langle C,Y-Z\rangle
=
-\langle M,Y\rangle
\le 0.
\]
Thus $C\in \mathcal{N}(\L_{n,k},Z)$, which proves the inclusion.
\end{proof}

\begin{proposition}\label{kbalane}
Let $F$ be a feature matrix associated with graph $G$ with vertex set $[n]$. Suppose there exists a perfect feature balanced partition $\{A_1,A_2,\dots,A_k\}$ of $[n]$. Then the Max-$k$-Cut SDP on $G$ is exact.
\end{proposition}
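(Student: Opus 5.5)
We will apply \Cref{thm1}, or equivalently \Cref{lem2}, with $Z$ the $k$-partition matrix of the perfectly balanced partition $(A_1,\ldots,A_k)$, and we will not need any multipliers $\lambda_{ij}$. The natural candidate is $M:=F^\top F\succeq 0$, the Gram matrix of the features, with $D:=\mathrm{Diag}(\|f_1\|^2,\ldots,\|f_n\|^2)$. Since $W_{ij}=f_i^\top f_j$ for $i\neq j$ and $W_{ii}=0$, this choice gives $W=M-D$, that is, $-W=D-M$. So the whole proof reduces to checking $\langle M,Z\rangle=0$.

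For that check, write $s_\ell:=\sum_{i\in A_\ell}f_i$ and $s:=\sum_{i\in[n]}f_i=\sum_\ell s_\ell$. The partition matrix can be expressed as
\[
Z=\frac{k}{k-1}\sum_{\ell=1}^k \mathbf 1_{A_\ell}\mathbf 1_{A_\ell}^\top-\frac{1}{k-1}\mathbf 1\mathbf 1^\top ,
\]
which one verifies entrywise: same-part pairs give $\frac{k}{k-1}-\frac{1}{k-1}=1$, and cross pairs give $-\frac{1}{k-1}$. Then
\[
\langle F^\top F,Z\rangle=\frac{k}{k-1}\sum_{\ell}\|s_\ell\|^2-\frac{1}{k-1}\|s\|^2 .
\]
Under perfect feature balance every $s_\ell=s/k$, so the first term equals $\frac{k}{k-1}\cdot k\cdot\frac{\|s\|^2}{k^2}=\frac{\|s\|^2}{k-1}$, and the expression vanishes.

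With $\langle M,Z\rangle=0$ established, \Cref{lem2} gives $-W\in\mathcal N(\mathcal L_{n,k},Z)$. Equivalently, \Cref{thm1} applies with all $\lambda_{ij}=0$. Since the SDP objective is a positive multiple of $\langle -W,X\rangle$ plus a constant, $Z$ is SDP-optimal, and because it is integral the relaxation is exact.

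The only real step is the computation of $\langle F^\top F,Z\rangle$. Equivalently, the Cauchy--Schwarz identity $k\sum_\ell\|s_\ell\|^2\ge\|s\|^2$ holds, with equality exactly when the $s_\ell$ are all equal; this is also what will later connect the objective to vector balancing. No sign assumption on the features is needed, consistent with the statement.
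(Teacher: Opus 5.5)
Your proof is correct and follows the paper's argument: you use the same certificate $M=F^\top F$ with $-W=D-M$, and you reduce exactness via \Cref{lem2} to checking $\langle M,Z\rangle=0$. The only difference is cosmetic: the paper verifies this by showing $FZ=0$ column by column, whereas you expand $Z=\frac{k}{k-1}\sum_\ell \mathbf 1_{A_\ell}\mathbf 1_{A_\ell}^\top-\frac{1}{k-1}\mathbf 1\mathbf 1^\top$ to get a closed-form trace, which also shows that this particular $M$ certifies $Z$ exactly when the partition is perfectly balanced.
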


\begin{proof}
Let $M=F^\top F$. Then $M\succeq 0$, and for $i\neq j$ we have $M_{ij}=w_{ij}$. Hence the
Max-$k$-Cut SDP objective, up to a positive scalar and an additive constant, is
equivalent to maximizing a matrix of the form $D-M$,
where $D\in \operatorname{DIAG}_n$. 

Let $Z$ be the $k$-partition matrix corresponding to the partition
$\{A_1,\dots,A_k\}$. Thus
\[
z_{ij}
=
\begin{cases}
1, & \text{if } i,j\in A_\ell \text{ for some } \ell,\\[4pt]
-\dfrac{1}{k-1}, & \text{if } i\in A_\ell,\ j\in A_m,\ \ell\neq m.
\end{cases}
\]
We show that $-W\in \mathcal{N}(\mathcal{L}_{n,k},Z)$. By \Cref{lem2}, it is enough to show that
\[
\langle M,Z\rangle=0.
\]

Let
\[
\sum_{i\in A_1} f_i
=
\sum_{i\in A_2} f_i
=
\cdots
=
\sum_{i\in A_k} f_i
=
\alpha .
\]
For any $j\in A_\ell$, the $j$th column of $FZ$ satisfies
\[
Fz_j
=
\sum_{i=1}^n z_{ij}f_i
=
\sum_{i\in A_\ell} f_i
-
\frac{1}{k-1}\sum_{m\neq \ell}\sum_{i\in A_m} f_i .
\]
Using the balance condition, 
\[
Fz_j
=
\alpha
-
\frac{1}{k-1}\sum_{m\neq \ell}\alpha
=
\alpha-\alpha
=
0.
\]
Therefore $FZ=0$. Since $M=F^\top F$, we obtain
\[
\langle M,Z\rangle
=
\langle F^\top F,Z\rangle
=
\operatorname{tr}(FZF^\top)
=
0.
\]
Hence $D-M\in \mathcal{N}(\mathcal{L}_{n,k},Z)$.

It follows that $Z$ maximizes the Max-$k$-Cut SDP objective over
$\mathcal{L}_{n,k}$. Since $Z$ is itself a $k$-partition matrix, the Max-$k$-Cut SDP is
exact.
\end{proof}

\begin{proposition}\label{dominating}
Let $F$ be a feature matrix associated with graph $G$ with vertex set $[n]$. Suppose there exists a set $I \subset [n]$ which is feature-dominating. Then the Max-$k$-Cut SDP on $G$ is exact.
\end{proposition}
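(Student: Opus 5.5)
The plan is to apply \Cref{thm1} to one explicit candidate partition. We build the matrix $M$ from a \emph{modified} feature matrix that is perfectly balanced on that partition, so that the argument of \Cref{kbalane} gives $\langle M,Z\rangle=0$. The discrepancy between $W$ and $M$ should then consist only of nonnegative entries on cut pairs, which \Cref{thm1} can absorb into the $\lambda_{ij}$ terms.

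\textbf{The candidate partition.} Write $I=\{i_1,\dots,i_{k-1}\}$ and $R=[n]\setminus I$. Since $k<n$, the set $R$ is nonempty. Take the partition
\[
A_\ell=\{i_\ell\}\ (\ell\in[k-1]),\qquad A_k=R,
\]
and let $Z$ be its $k$-partition matrix. Every pair involving some $i\in I$ is then separated, so the active set $\mathcal A(Z)$ contains all pairs $(i,j)$ with $\{i,j\}\cap I\neq\emptyset$.

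\textbf{The modified features.} Set $s:=\sum_{j\in R}f_j\in\R^r_+$ and define
\[
g_i=s \ \text{ for } i\in I,\qquad g_j=f_j \ \text{ for } j\in R,
\]
with $G=[g_1,\dots,g_n]$. Every part of the partition then has aggregate $G$-feature equal to $s$. The computation in the proof of \Cref{kbalane} therefore gives $GZ=0$, and hence, with $M:=G^\top G\succeq0$, we get $\langle M,Z\rangle=0$.

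\textbf{Comparing $W$ and $M$ entrywise off the diagonal.} This uses only nonnegativity of the features and $f_i\ge_{\mathrm{cw}} s$.
\begin{itemize}
\item For $j,j'\in R$: $W_{jj'}=f_j^\top f_{j'}=M_{jj'}$.
\item For $i\in I$, $j\in R$: $W_{ij}=f_i^\top f_j\ge s^\top f_j=M_{ij}$, because $f_j\ge0$ and $f_i\ge_{\mathrm{cw}}s$.
\item For $i\neq i'\in I$: $W_{ii'}=f_i^\top f_{i'}\ge s^\top f_{i'}\ge s^\top s=M_{ii'}$, using nonnegativity twice.
\end{itemize}
Thus $W-M$ is zero on pairs inside $R$ and nonnegative off the diagonal, and it is supported off the diagonal only on pairs in $\mathcal A(Z)$.

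\textbf{Assembling the certificate.} Set
\[
\lambda_{ij}=2(W_{ij}-M_{ij})\ge0 \quad\text{for } (i,j)\in\mathcal A(Z),\qquad D=\operatorname{diag}(M_{11},\dots,M_{nn}).
\]
The factor $2$ compensates for the $\tfrac12$ in $E^{ij}$. Since $W_{ii}=0$, one checks entrywise that
\[
-W=D-M-\sum_{(i,j)\in\mathcal A(Z)}\lambda_{ij}E^{ij}.
\]
\Cref{thm1} then shows that $Z$ is SDP-optimal, and hence the relaxation is exact.

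\textbf{Main obstacle.} The key step is finding the right $M$. The naive choice $F^\top F$ fails because the dominant features break balance. Replacing each dominant $f_i$ by $s$ restores balance while only decreasing the cut-pair entries, and this is what makes the $\lambda_{ij}\ge0$ requirement work. The remaining steps are routine checks, including the degenerate case $s=0$, where $M$ is still valid.
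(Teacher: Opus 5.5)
Your proposal is correct and follows the paper's proof essentially step for step. It uses the same singleton-plus-remainder partition, the same modified features ($s$ in place of $f_i$ for $i\in I$), the same entrywise comparison of $W$ with $M$, and the same $\lambda_{pq}=2(W_{pq}-M_{pq})$ with $D=\operatorname{diag}(M_{11},\dots,M_{nn})$, which is exactly the paper's $-D_0$.
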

\begin{proof}
Without loss of generality, let the dominating columns be indexed by $I=\{1,\ldots,k-1\}$,
and let $R=[n]\setminus I$.
Define
\[
s:=\sum_{j\in R}f_j\in\mathbb{R}_+^r.
\]
By assumption, for every $i\in I$, $f_i\geq_{\mathrm{cw}} s$.

Let $Z$ be the $k$-partition matrix corresponding to the partition in which
each vertex in $I$ is placed in a distinct singleton part, and all vertices in
$R$ are placed in the remaining part. Thus the active pairs of $Z$ are precisely
the pairs lying in different parts.

Define modified feature vectors $\widetilde f_i$ by
\[
\widetilde f_i=
\begin{cases}
s, & i\in I,\\
f_i, & i\in R.
\end{cases}
\]
Let
\[
\widetilde F=[\widetilde f_1,\ldots,\widetilde f_n],
\qquad
M:=\widetilde F^\top \widetilde F.
\]
Then $M\succeq 0$.

We first show that $\langle M,Z\rangle=0$. The sum of the modified feature
vectors over each part of the partition is equal to $s$: each singleton part
indexed by $i\in I$ has feature sum $s$, while the last part has feature sum
\[
\sum_{j\in R}\widetilde f_j
=
\sum_{j\in R}f_j
=
s.
\]
Therefore, for any column $z_j$ of $Z$, the same feature-balance calculation of \Cref{kbalane}
gives
\[
\widetilde F z_j=0.
\]
Hence
\[
\widetilde F Z=0.
\]
Since $M=\widetilde F^\top \widetilde F$, it follows that
\[
\langle M,Z\rangle
=
\langle \widetilde F^\top \widetilde F,Z\rangle
=
\operatorname{tr}(\widetilde F Z\widetilde F^\top)
=
0.
\]

Now compare $M$ with the original weight matrix $W=F^\top F$. If $i,j\in R$,
then
\[
M_{ij}
=
f_i^\top f_j
=
W_{ij}.
\]
Thus $M$ and $W$ agree on pairs inside the last part.

Next, consider an active pair, as defined in \Cref{thm1}. If $i\in I$ and $j\in R$, then
\[
W_{ij}-M_{ij}
=
f_i^\top f_j-s^\top f_j
=
(f_i-s)^\top f_j
\geq 0,
\]
because $f_i-s\geq_{\mathrm{cw}}0$ and $f_j\in\mathbb{R}^r_+$.

Similarly, if $i,\ell\in I$ with $i\neq \ell$, then
\[
W_{i\ell}-M_{i\ell}
=
f_i^\top f_\ell-s^\top s
\geq 0,
\]
because $f_i\geq_{\mathrm{cw}}s$, $f_\ell\geq_{\mathrm{cw}}s$, and all vectors
are non-negative.

Define
\[
\lambda_{pq}:= 2 \cdot (W_{pq}-M_{pq})
\]
for every active pair $(p,q)\in\mathcal A(Z)$. From the above argument,
\[
\lambda_{pq}\geq 0.
\]
Moreover, $W$ and $M$ agree on every inactive off-diagonal pair. Hence there
exists a diagonal matrix $D_0\in\mathrm{DIAG}_n$ such that
\[
W
=
M+\sum_{(p,q)\in\mathcal A(Z)} \lambda_{pq} \cdot E^{pq}+D_0,
\]
Equivalently,
\[
-W
=
-D_0-M-\sum_{(p,q)\in\mathcal A(Z)}  \lambda_{pq}\cdot E^{pq}.
\]
Setting $D:=-D_0$, we obtain
\[
-W
=
D-M-\sum_{(p,q)\in\mathcal A(Z)} \lambda_{pq} \cdot E^{pq},
\]
where $D\in\mathrm{DIAG}_n$, $M\succeq 0$, $\langle M,Z\rangle=0$, and
$\lambda_{pq}\geq0$ for all active pairs.

By the normal-cone exactness certificate, this implies
\[
-W\in \mathcal N(\mathcal L_{n,k},Z).
\]
Therefore $Z$ is an optimal solution of the Max-$k$-Cut SDP. Since $Z$ is a
$k$-partition matrix, the SDP optimum is attained by an integral feasible
solution. Hence the Max-$k$-Cut SDP is exact.
\end{proof}

\begin{remark}
The non-negativity assumption in the feature-dominance condition becomes
essential when $k \geq 3$. For $k=2$, rank-$1$ feature graphs were studied by Laurent and
Poljak~\cite{laurent1995positive} who showed that a single dominating
coordinate satisfying $|f_i|>\sum_{j\neq i}|f_j|$ certifies SDP exactness for
arbitrary sign patterns. This changes when $k-1\geq 2$ coordinates must dominate simultaneously. Consider $k=3$ and the scalar features $f_1=-2$, $f_2=1$, $f_3=3$, and
$f_4=6$. The set $I=\{3,4\}$ satisfies the dominance inequalities
$f_i\geq |f_1|+|f_2|\geq f_1+f_2$ for every $i\in I$. Nevertheless, the
FJ-SDP is not exact for this instance: numerically, its optimal value is
$z_{\mathrm{MkC\text{-}SDP}}\approx 19.1652$, attained at a fractional
solution.
\end{remark}

\section{Max-$k$-Cut as Feature Balancing}\label{Sec4}

We begin this section with the basic observation that under a feature representation of the weights, the Max-$k$-Cut objective admits an equivalent feature-balancing formulation. More precisely, for a partition $(A_1,\ldots,A_k)$ of $[n]$, let $s_\ell$ denote the sum of the feature vectors assigned to part $A_\ell$. Also let
$t=\sum_{i=1}^n f_i$. The Max-$k$-Cut objective can then be written entirely in terms of the vectors $s_1,\ldots,s_k$.

\begin{observation}\label{feat_energy}
Let $F$ be a feature matrix associated with graph $G$ with vertex set $[n]$. Let $(A_1,\ldots,A_k)$ be a partition of $[n]$ and
define the feature sum of part $A_\ell$ by
\[
s_\ell=\sum_{i\in A_\ell}f_i,\qquad \ell=1,\ldots,k.
\]
Then the value of the $k$-cut induced by $(A_1,\ldots,A_k)$ is
\[
\operatorname{Cut}(A_1,\ldots,A_k)
=
\frac12
\left(
\|t\|^2-\sum_{\ell=1}^k\|s_\ell\|^2
\right).
\]
Consequently, Max-$k$-Cut on $G$ is equivalent to minimizing

\[
\Phi(A_1,\ldots,A_k)
=
\sum_{\ell=1}^k\|s_\ell\|^2.
\]
\end{observation}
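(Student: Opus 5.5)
The argument is a direct expansion of squared norms. Since $t=\sum_{\ell=1}^k s_\ell$, we can write $\|t\|^2=\sum_{\ell}\|s_\ell\|^2+2\sum_{\ell<m}s_\ell^\top s_m$. This gives
\[
\tfrac12\Bigl(\|t\|^2-\sum_{\ell=1}^k\|s_\ell\|^2\Bigr)=\sum_{\ell<m}s_\ell^\top s_m .
\]
It therefore suffices to identify $\sum_{\ell<m}s_\ell^\top s_m$ with $\operatorname{Cut}(A_1,\ldots,A_k)$.

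Fix $\ell<m$. By bilinearity,
\[
s_\ell^\top s_m=\sum_{i\in A_\ell}\sum_{j\in A_m}f_i^\top f_j .
\]
Because $A_\ell\cap A_m=\emptyset$, every pair here has $i\neq j$, so $f_i^\top f_j=w_{ij}$ by the definition of a feature matrix. Hence $s_\ell^\top s_m=\sum_{i\in A_\ell,\,j\in A_m}w_{ij}$. Summing over $\ell<m$ reproduces exactly the definition of $\operatorname{Cut}$ from the preliminaries. A pair of distinct vertices that is not an edge contributes $W_{ij}=0$; this is consistent as long as $w_{ij}$ is read as $W_{ij}$ for all $i\neq j$, which is how the feature representation $w_{ij}=f_i^\top f_j$ for every $i\neq j$ is stated.

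Two small points need care. First, the diagonal terms $\|f_i\|^2$ appear only inside $\|s_\ell\|^2$. They never show up in the cross terms, so they need no interpretation as weights, and this is exactly why they cancel. Second, each unordered cut pair $\{i,j\}$ must be counted once. It is, because $\ell<m$ fixes an order on the parts and each vertex lies in exactly one part.

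For the consequence, $t$ depends only on $F$ and not on the partition. So $\operatorname{Cut}$ is an affine, strictly decreasing function of $\Phi$, and maximizing the cut over $k$-partitions is equivalent to minimizing $\Phi$ over the same set, with the same optimal partitions. No step is a genuine obstacle; the only point needing attention is the bookkeeping with $i\neq j$ and unordered pairs noted above.
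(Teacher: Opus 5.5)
Your proposal is correct and follows essentially the same route as the paper: expand $\|t\|^2$ using $t=\sum_\ell s_\ell$, then identify $\sum_{\ell<m}s_\ell^\top s_m$ with $\operatorname{Cut}(A_1,\ldots,A_k)$ by bilinearity and $w_{ij}=f_i^\top f_j$ for $i\neq j$. You also spell out bookkeeping the paper leaves implicit: disjoint parts guarantee $i\neq j$, non-edges are read as $W_{ij}=0$, and each unordered cut pair is counted once.
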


\begin{proof}

Since $w_{ij}=f_i^\top f_j$ for $i \neq j \in [n]$ and 
$s_\ell=\sum_{i\in A_\ell}f_i$ for $\ell \in [k], i \in [n]$; the weight of the cut induced by a partition 
$(A_1,\ldots,A_k)$ can be written as:
\begin{align*}
\operatorname{Cut}(A_1, \ldots, A_k) &= \sum_{r < s} \sum_{i \in A_r, j \in A_s} w_{ij} \\
&=   \sum_{r < s} \sum_{i \in A_r, j \in A_s} f_i^\top f_j\\
&= \sum_{\ell < m}s_\ell^\top s_m
\end{align*}

Therefore,
\[
\|t\|^2
=
\left\|\sum_{\ell=1}^k s_\ell\right\|^2
=
\sum_{\ell=1}^k\|s_\ell\|^2
+
2\sum_{\ell<m}s_\ell^\top s_m.
\]
Rearranging gives
\[
\sum_{\ell<m}s_\ell^\top s_m
=
\frac12
\left(
\|t\|^2-\sum_{\ell=1}^k\|s_\ell\|^2
\right)= \frac12
\left(
\|t\|^2-\sum_{\ell=1}^k\|s_\ell\|^2
\right).
\]

Since $\|t\|^2$ is independent of the partition, maximizing the cut value is
equivalent to minimizing
\[
\sum_{\ell=1}^k\|s_\ell\|^2.
\]
This proves the claim.
\end{proof}

\Cref{feat_energy} gives a direct interpretation of the Max-$k$-Cut
objective. Since $t=\sum_{\ell=1}^k s_\ell$, we have
\[
\sum_{\ell=1}^k \|s_\ell\|^2
=
\sum_{\ell=1}^k
\left\|s_\ell-\frac{t}{k}\right\|^2
+
\frac{\|t\|^2}{k}.
\]
Therefore, minimizing $\sum_{\ell=1}^k\|s_\ell\|^2$ is equivalent to making the
feature sums of the $k$ parts as balanced as possible around the common target
$t/k$. This observation connects Max-$k$-Cut with the literature on vector
balancing and discrepancy minimization, an active area of research.

\subsection{Greedy Feature-Balancing approach}\label{Sec4.1}

In the following, we take a simple greedy balancing approach adapted to the Max-$k$-Cut problem for non-negative edge weights. We start with $k$ empty parts and process the vertices
sequentially. At each step, the decision of where to place a vertex is made
using its feature vector and the current aggregate feature vectors of the
$k$ parts. More precisely, if $s_\ell=\sum_{i\in A_\ell}f_i$ denotes the current feature sum of part $A_\ell$, then a vertex $v$ is assigned
to a part minimizing the inner product $f_v^\top s_\ell$. Thus, the algorithm
places each incoming feature vector in the part with which it has minimum
current feature affinity.

This choice is not purely heuristic. The same minimum-inner-product rule,
often referred to as the \emph{inner product rule}, has been studied in vector
balancing and shown to achieve essentially optimal discrepancy bounds for broad
classes of well-behaved random inputs \cite{aru2018balancing}. We adopt this
rule as a direct greedy procedure for feature-generated Max-$k$-Cut. When a
low-dimensional feature representation is available, the resulting algorithm
is also substantially more economical in time and memory than SDP-based
approaches such as the Frieze--Jerrum relaxation and the FKP iteration \cite{felzenszwalb2022clustering}.

\begin{algorithm}[h]
\small
\caption{Greedy Feature-Balancing Algorithm}
\label{alg:greedy_feature_balance}
\begin{algorithmic}[1]
\Require Feature vectors $f_1,\ldots,f_n\in\mathbb{R}^r$ such that $w_{ij} = f_i^{\top} f_j \geq 0$ and number of parts $k$.
\State Choose an ordering \(\pi=(v_1,\ldots,v_n)\) of the vertices such that \[ \|f_{v_1}\| \ge \|f_{v_2}\| \ge \cdots \ge \|f_{v_n}\| . \] 
\State Initialize \(A_1,\ldots,A_k\leftarrow \emptyset\). \State Initialize \(s_1,\ldots,s_k\leftarrow 0\in\mathbb{R}^r\). 
\For{$t=1,\ldots,n$}
    \State Let $v=v_t$.
    \For{each $\ell\in[k]$}
        \State Compute the affinity of $v$ with part $A_\ell$:
        \[
        c_\ell(v)\leftarrow f_v^\top s_\ell.
        \]
    \EndFor
    \State Choose
    \[
    \ell^\star\in\arg\min_{\ell\in[k]} c_\ell(v).
    \]
    \State Assign $v$ to $A_{\ell^\star}$:
    \[
    A_{\ell^\star}\leftarrow A_{\ell^\star}\cup\{v\}.
    \]
    \State Update the feature sum:
    \[
    s_{\ell^\star}\leftarrow s_{\ell^\star}+f_v.
    \]
\EndFor
\State \Return $A_1,\ldots,A_k$.
\end{algorithmic}
\end{algorithm}

\begin{remark}\label{rem:cprank}
We assume that the feature representation is available as part of the input.
This is natural in applications where the features are observed directly or arise
from a latent representation. In fact, every non-negatively weighted graph admits some feature representation. Let $D=\mathrm{diag}(d_1,\ldots,d_n)$ denote the weighted degree matrix, $d_i=\sum_j w_{ij}$, and consider the signless Laplacian $Q:=D+W$. Since $Q\succeq0$, it therefore admits a factorization $Q=F^\top F$ for some $F=[f_1,\ldots,f_n]\in\mathbb{R}^{r\times n}$ with $r=\mathrm{rank}(Q)\le n$. $F$ is a valid feature matrix for $G$, though generally not of minimal rank: finding the smallest such $r$ here is a rank-minimization problem naturally relaxed via semidefinite programming. We do not address the recovery problem in this manuscript.
\end{remark}

\Cref{alg:greedy_feature_balance} works directly with the feature representation rather than an $n\times n$ semidefinite matrix. Computing feature norms and sorting the vertices takes $O(nr+n\log n)$ time. The greedy assignment adds a further $O(nkr)$, since each vertex requires $k$ inner products in $\mathbb{R}^r$ — for an overall running time of $O(nkr+n\log n)$. Storing the feature matrix, the $k$ aggregate feature sums, and the current partition requires $O(nr+kr+n)$ memory. Thus, for small feature dimension $r$, the algorithm avoids both the storage and the computational cost of an $n\times n$ semidefinite program. In addition, \Cref{alg:greedy_feature_balance} admits the following guarantees. First, irrespective of the feature
structure, the algorithm enjoys the same worst-case approximation guarantee as
classical greedy procedures for Max-$k$-Cut. We record this standard bound for completeness; the same ratio is achieved by classical greedy and local-search based algorithms for Max-$k$-Cut~\cite{sahni1976pcomplete,zhu2011local}. Also, the
ordering of vectors ensures that the algorithm
recovers the optimal partition if the feature dominance condition holds.

\begin{proposition}\label{prop:approx_guarantee}
\Cref{alg:greedy_feature_balance} is a
$\left(1-\frac1k\right)$-approximation algorithm for Max-$k$-Cut.
\end{proposition}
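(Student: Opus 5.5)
We compare the algorithm's cut against the total edge weight $\sum_{i<j} w_{ij}$. Since all weights are non-negative, this total is an upper bound on $\operatorname{OPT}$. So it suffices to show
\[
\operatorname{Cut}(A_1,\ldots,A_k)\ \ge\ \Bigl(1-\tfrac1k\Bigr)\sum_{i<j}w_{ij}.
\]
The ordering by feature norm plays no role here; the argument works for any processing order.

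We track the cut incrementally. Consider step $t$, when vertex $v=v_t$ is processed, and let $s_1,\ldots,s_k$ be the current part sums. These sums cover exactly the previously placed vertices $v_1,\ldots,v_{t-1}$. The affinity $c_\ell(v)=f_v^\top s_\ell=\sum_{u\in A_\ell}w_{vu}$ is the weight of edges from $v$ to the current part $A_\ell$. The total weight from $v$ to earlier vertices is therefore
\[
B_t:=\sum_{\ell=1}^k c_\ell(v)=f_v^\top\sum_{\ell}s_\ell .
\]
Assigning $v$ to $A_{\ell^\star}$ makes exactly the edges to the other parts cut. Hence the cut increases by $B_t-c_{\ell^\star}(v)$. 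Edges between $v$ and vertices placed later are accounted for when those vertices are processed, and each edge is counted exactly once: at the step when its later endpoint arrives. Subsequent steps never change whether an existing edge is cut, because vertices are never moved.

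The key step is an averaging inequality. Since $\ell^\star$ minimizes $c_\ell(v)$, we have
\[
c_{\ell^\star}(v)\le \frac1k\sum_{\ell}c_\ell(v)=\frac{B_t}{k},
\]
so the increment is at least $(1-\tfrac1k)B_t$. Summing over $t=1,\ldots,n$, and using $\sum_t B_t=\sum_{i<j}w_{ij}$ (each edge is counted once, at its later endpoint), gives
\[
\operatorname{Cut}\ \ge\ \Bigl(1-\tfrac1k\Bigr)\sum_{i<j}w_{ij}\ \ge\ \Bigl(1-\tfrac1k\Bigr)\operatorname{OPT}.
\]
Non-negativity of the weights is used only in the final inequality, total weight $\ge \operatorname{OPT}$. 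With negative weights that bound can fail, which is why the algorithm requires $w_{ij}\ge 0$.

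There is no real obstacle. The only point needing care is the bookkeeping that each edge is counted exactly once and that later assignments never uncut an earlier edge. Alternatively, one could phrase the argument through \Cref{feat_energy}: the greedy step minimizes the increase $2f_v^\top s_\ell+\|f_v\|^2$ of $\Phi$, and averaging gives the same bound. The direct edge-counting argument is cleaner, so I would present that.
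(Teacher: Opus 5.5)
Your proof is correct and matches the paper's argument: at each step the minimum affinity is at most the average $\frac1k\sum_{u\in P}w_{uv}$, summing over steps counts each edge once at its later endpoint, and the total weight $\sum_{i<j}w_{ij}$ bounds $\operatorname{OPT}$. Your remark that non-negativity is needed only for that last comparison is accurate, and the paper does not spell it out.
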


\begin{proof}
Fix the step in which a vertex $v$ is assigned, and let $P$ be the set of
vertices assigned before $v$. At this step, the current parts
$A_1,\ldots,A_k$ form a partition of $P$. If $v$ is placed in
$A_\ell$, the internal weight created is
\[
f_v^\top s_\ell
=
\sum_{u\in A_\ell} w_{uv}.
\]
Since the algorithm chooses a part $\ell^\star$ minimizing this quantity,
\[
f_v^\top s_{\ell^\star}
\leq
\frac{1}{k}\sum_{\ell=1}^k f_v^\top s_\ell
=
\frac{1}{k}\sum_{u\in P} w_{uv},
\]
where the last equality follows because $A_1,\ldots,A_k$ partition $P$.
Hence, the cut between $v$ and the previously assigned vertices is at
least
\[
\left(1-\frac{1}{k}\right)\sum_{u\in P} w_{uv}.
\]

Summing over all vertices in the chosen ordering, each edge is counted exactly
once, namely when its later endpoint is assigned. Let $W_{tot} = \sum_{i<j} w_{ij}$. Then the algorithm
returns a cut of value at least
\[
\left(1-\frac{1}{k}\right)\sum_{i<j} w_{ij}
=
\left(1-\frac{1}{k}\right)W_{\mathrm{tot}}
\geq\left(1-\frac{1}{k}\right)\operatorname{OPT}.\]
\end{proof}

\begin{proposition}
If feature dominance holds, \Cref{alg:greedy_feature_balance}
returns an optimal $k$-cut.
\end{proposition}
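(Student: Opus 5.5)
The plan is to compare the greedy partition directly with the partition $Z$ from the proof of \Cref{dominating}. Let $I$ be feature-dominating, $R=[n]\setminus I$ and $s=\sum_{j\in R}f_j$. Let $Z$ be the partition with each $i\in I$ a singleton and $R$ as the last part. The proof of \Cref{dominating} shows $Z$ is an optimal SDP solution, and since $Z$ is integral, $Z$ is an optimal $k$-cut. So it suffices to show that the greedy partition has total internal weight at most $\sum_{j<j'\in R}w_{jj'}$, the internal weight of $Z$. Since $\operatorname{Cut}=W_{\mathrm{tot}}-\text{(internal weight)}$, this gives optimality. I would account for internal weight incrementally, as in \Cref{prop:approx_guarantee}: when $v$ is placed into $A_\ell$ it adds $f_v^\top s_\ell$.

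\textbf{Step 1 (ordering).} Since all features are non-negative, for $i\in I$ and $j\in R$ we have $f_i\ge_{\mathrm{cw}} s\ge_{\mathrm{cw}} f_j\ge_{\mathrm{cw}}0$, hence $\|f_i\|\ge\|f_j\|$. So, up to ties, the first $k-1$ vertices in the ordering are exactly $I$.

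Ties are the step I expect to be the main obstacle. If $\|f_i\|=\|f_j\|$ for some $i\in I$, $j\in R$, then the componentwise chain forces $f_i=s=f_j$. Hence every other vector in $R$ is zero. In that case I would swap $i$ and $j$ and check that the new set is still feature-dominating:
\begin{itemize}
\item the new complement is $(R\setminus\{j\})\cup\{i\}$, whose feature sum is $s$;
\item $f_j=s$, and the remaining members of $I$ dominate $s$ as before.
\end{itemize}
Repeating this swap, we may assume $I$ consists of the first $k-1$ vertices of the chosen ordering $\pi$. The optimal value is the same for the relabelled $I$, since the cut of the corresponding $Z$ is again optimal by \Cref{dominating}.

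\textbf{Step 2 (dominating vertices).} When each of the first $k-1$ vertices $v\in I$ is processed, fewer than $k$ parts are nonempty. So some $s_\ell=0$, the minimum affinity is $0$, and the increment is $0$. This matches $Z$, where these vertices are singletons. (If greedy places $v$ in a nonempty part at affinity $0$, the increment is still $0$.)

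\textbf{Step 3 (remaining vertices).} Now process $j\in R$, and let $p$ be the sum of the features of the $R$-vertices processed before $j$. The increment of $Z$ for $j$ is $f_j^\top p$. At this moment the $k-1$ dominating vertices occupy at most $k-1$ parts, so by pigeonhole some part $A_\ell$ contains no vertex of $I$. Its feature sum is a sum over a subset of the earlier $R$-vertices, so $s_\ell\le_{\mathrm{cw}}p$. Non-negativity then gives
\[
\min_{m} f_j^\top s_m\;\le\; f_j^\top s_\ell\;\le\; f_j^\top p .
\]

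\textbf{Conclusion.} Summing the increments over all vertices, the greedy internal weight is at most $\sum_{j<j'\in R}w_{jj'}$, the internal weight of $Z$. Hence the greedy cut is at least $\operatorname{Cut}(Z)=\operatorname{OPT}$, so it is optimal. Note that this argument needs only the cut value, not that greedy reproduces $Z$ exactly, so ties in the $\arg\min$ cause no difficulty.
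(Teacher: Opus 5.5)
Your proof is correct, but its core step differs from the paper's.

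The paper argues structurally: it shows the greedy reproduces the dominance partition itself. It first places the dominating vertices in distinct parts, since an empty part has affinity $0\le\sigma^\top\sigma\le f_i^\top f_h$. It then forces every $v\in R$ into the remaining part through the strict chain $f_v^\top q<f_v^\top\sigma\le f_v^\top f_i$ for $f_v\neq 0$. Ties in the norm ordering, ties in the $\arg\min$, and zero vectors are dismissed informally (``up to harmless equality cases'', ``we may regard'').

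You instead compare costs increment by increment, as in \Cref{prop:approx_guarantee}. Your pigeonhole observation is that some part contains no vertex of $I$ and hence has feature sum $\le_{\mathrm{cw}} p$. This bounds each greedy increment by the corresponding increment of $Z$, and that step uses no dominance inequality at all.

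The paper's route gives the stronger, more informative conclusion that, generically, the algorithm outputs exactly $Z$. Yours gives rigor and robustness. Ties in the $\arg\min$ become irrelevant. Your swap argument ($f_i=s=f_j$, all other vectors of $R$ zero, $(I\setminus\{i\})\cup\{j\}$ still dominating) makes precise the ordering tie case that the paper only asserts is harmless.

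It also isolates where dominance is actually used. Your Steps 2--3 show, for arbitrary non-negative features and any processing order, that the greedy's internal weight is at most that of the partition making the first $k-1$ processed vertices singletons. So dominance is needed only for two things: making those vertices a dominating set, and making that partition optimal via \Cref{dominating}.
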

\begin{proof}
Let $I\subseteq[n]$ be a feature-dominating set with $|I|=k-1$, and write
\[
R=[n]\setminus I,
\qquad
\sigma =\sum_{j\in R} f_j .
\]
By feature dominance,
\[
f_i\geq_{\mathrm{cw}} \sigma,
\qquad \forall i\in I.
\]
Since all feature vectors are non-negative, we also have
\[
\sigma\geq_{\mathrm{cw}} f_j,
\qquad \forall j\in R.
\]
Consequently,
\[
\|f_i\|\geq \|\sigma\|\geq \|f_j\|,
\qquad \forall i\in I,\ j\in R.
\]
Thus, under the non-increasing norm ordering used by
\Cref{alg:greedy_feature_balance}, the $k-1$ dominating feature vectors
are processed before the remaining vertices, up to harmless equality cases.

We first show that these $k-1$ dominating vertices are assigned to distinct
parts. When the first dominating vertex is processed, all parts are empty. Now
suppose that some dominating vertices have already been assigned to distinct
parts, and let $i\in I$ be the next dominating vertex. Every empty part has
current feature sum zero, and hence affinity zero with $f_i$. On the other hand,
if a part already contains a dominating vertex $h\in I$, then its affinity with
$f_i$ is at least
\[
f_i^\top f_h
\geq
\sigma^\top \sigma
\geq 0.
\]
Thus an empty part is always a minimum-affinity choice. In the degenerate case
where equality occurs, placing two such vertices in the same part creates zero
internal weight and is therefore immaterial for optimality. Hence we may regard
the $k-1$ dominating vertices as occupying $k-1$ distinct parts, leaving one
part, say $A_k$, for the remaining vertices.

It remains to show that every vertex in $R$ is assigned to this remaining part.
Suppose that some subset $R'\subseteq R$ has already been assigned to $A_k$, and
let
\[
q=\sum_{j\in R'} f_j
\]
be the current feature sum of $A_k$. Consider the next vertex $v\in R\setminus R'$.
Since $q\leq_{\mathrm{cw}} \sigma$, we have
\[
f_v^\top q \leq f_v^\top \sigma.
\]
Moreover, for every dominating vertex $i\in I$, feature dominance gives
\[
f_v^\top \sigma \leq f_v^\top f_i.
\]
Therefore,
\[
f_v^\top q \leq f_v^\top f_i,
\qquad \forall i\in I.
\]
In fact, if $f_v\neq 0$, then the first inequality is strict whenever $v$ has not
yet been assigned to $A_k$, because $\sigma-q$ contains $f_v$ and hence
\[
f_v^\top(\sigma-q)\geq \|f_v\|^2>0.
\]
Thus the affinity of $v$ with the remaining part $A_k$ is strictly smaller than
its affinity with any part containing a dominating vertex. Hence the algorithm
assigns $v$ to $A_k$. Zero feature vectors contribute no edge weight and may be
assigned arbitrarily without affecting the cut value. By induction, all vertices
in $R$ are assigned to the remaining part.

Consequently, \Cref{alg:greedy_feature_balance} returns the partition
in which the $k-1$ dominating vertices lie in distinct parts and all vertices of
$R$ lie in the remaining part. By \Cref{dominating}, this partition is an optimal
$k$-cut.
\end{proof}

\subsection{Rank-1 feature graph}

In the rank-1 case, the feature representation is especially advantageous.
Here each vertex carries a scalar feature $f_i \in \mathbb{R}_+$, so
Algorithm~\ref{alg:greedy_feature_balance} sorts the vertices by weight and
performs a single scalar comparison against each of the $k$ running part-sums
at every step, running in $O(nk + n\log n)$ time and $O(n + k)$ memory. This is
in contrast to SDP-based approaches such as the Frieze--Jerrum relaxation
and the FKP iteration \cite{felzenszwalb2022clustering}, which form and manipulate an $n \times n$ semidefinite
matrix and therefore require at least $\Omega(n^2)$ memory. 

\begin{proposition}\label{rank-one}
Let $G$ be a rank-1 feature graph with non-negative edge weights, and let $\mathcal A^{\mathrm{ALG}}$ be the partition returned by \Cref{alg:greedy_feature_balance}, with $\mathrm{ALG}:=\mathrm{Cut}(\mathcal A^{\mathrm{ALG}})$ its objective value. Let $\mathcal A^*$ denote an optimal partition, and write
\[
\Phi^{\mathrm{ALG}}:=\Phi(\mathcal A^{\mathrm{ALG}}),\qquad \Phi^*:=\Phi(\mathcal A^*)
\]
for the corresponding values of $\Phi$ (\Cref{feat_energy}). Let $\alpha\ge1$ denote a worst-case constant for the greedy rule on the sum-of-squares partition problem, i.e.\ $\Phi^{\mathrm{ALG}}\le\alpha\,\Phi^*$ for every instance. Then
\[
\mathrm{OPT}-\mathrm{ALG}\ \le\ \frac{\alpha-1}{2\alpha}\,\Phi^{\mathrm{ALG}}.
\]
\end{proposition}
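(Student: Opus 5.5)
By \Cref{feat_energy}, every partition satisfies $\mathrm{Cut}(\mathcal A)=\tfrac12(\|t\|^2-\Phi(\mathcal A))$, where $\|t\|^2$ does not depend on the partition. Applying this to $\mathcal A^*$ and to $\mathcal A^{\mathrm{ALG}}$ and subtracting, the additive constant cancels and we get
\[
\mathrm{OPT}-\mathrm{ALG}=\tfrac12\bigl(\Phi^{\mathrm{ALG}}-\Phi^*\bigr).
\]
So the claim reduces to bounding $\Phi^{\mathrm{ALG}}-\Phi^*$ from above using only $\Phi^{\mathrm{ALG}}$.

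Next we check that the hypothesis on $\alpha$ applies to \Cref{alg:greedy_feature_balance}. In the rank-1 case with $f_i\in\R_+$, the affinity is $c_\ell(v)=f_v s_\ell$. When $f_v>0$, minimizing $f_v s_\ell$ is the same as minimizing the scalar load $s_\ell$. The ordering step sorts by $|f_v|=f_v$ in non-increasing order. The algorithm is therefore exactly the greedy rule for sum-of-squares partitioning: process items in decreasing size and put each into the currently least-loaded part. When $f_v=0$ the choice of part is arbitrary, but such a vertex changes no $s_\ell$ and hence does not affect $\Phi$. We may assume ties among minimum-load parts are broken as in the greedy rule to which $\alpha$ refers; this is harmless because any tie-break produces the same multiset of loads. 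Hence $\Phi^{\mathrm{ALG}}\le\alpha\,\Phi^*$, that is, $\Phi^*\ge\Phi^{\mathrm{ALG}}/\alpha$.

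Substituting into the identity gives
\[
\mathrm{OPT}-\mathrm{ALG}\le\tfrac12\Bigl(\Phi^{\mathrm{ALG}}-\tfrac{1}{\alpha}\Phi^{\mathrm{ALG}}\Bigr)=\frac{\alpha-1}{2\alpha}\,\Phi^{\mathrm{ALG}}.
\]

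The only step needing care is the identification of the algorithm with the Chandra--Wong greedy rule. This includes the zero-feature and tie-breaking cases, and the fact that $\Phi$ in \Cref{feat_energy} is $\sum_\ell s_\ell^2$, which is exactly the sum-of-squares objective. Everything else is algebra.
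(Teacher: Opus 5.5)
Your proof is correct and matches the paper's: both use \Cref{feat_energy} to get $\mathrm{OPT}-\mathrm{ALG}=\tfrac12(\Phi^{\mathrm{ALG}}-\Phi^*)$ and then substitute $\Phi^*\ge\Phi^{\mathrm{ALG}}/\alpha$. Your extra check that \Cref{alg:greedy_feature_balance} coincides with the sorted least-loaded greedy rule (including zero features and tie-breaking) is not needed for the proposition as stated, since the hypothesis on $\alpha$ already refers to the algorithm's output; it is, however, the justification the paper leaves implicit when it later substitutes Chandra and Wong's constant $\alpha=25/24$.
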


\begin{proof}
By definition,
\[
\mathrm{OPT}=\frac12\bigl(t^2-\Phi^*\bigr),\qquad \mathrm{ALG}=\frac12\bigl(t^2-\Phi^{\mathrm{ALG}}\bigr).
\]
Subtracting,
\[
\mathrm{OPT}-\mathrm{ALG}=\frac12\bigl(\Phi^{\mathrm{ALG}}-\Phi^*\bigr).
\]
Since $\Phi^{\mathrm{ALG}}\le\alpha\,\Phi^*$ (the greedy's worst-case guarantee), we have $\Phi^*\ge\Phi^{\mathrm{ALG}}/\alpha$, so
\begin{align*}
\mathrm{OPT}-\mathrm{ALG}\ &\le\ \frac12\left(\Phi^{\mathrm{ALG}}-\frac{\Phi^{\mathrm{ALG}}}{\alpha}\right)\\
&=\frac{\Phi^{\mathrm{ALG}}}{2}\left(1-\frac1\alpha\right)\\
&=\frac{\alpha-1}{2\alpha}\,\Phi^{\mathrm{ALG}}.    
\end{align*}
\end{proof}
By \Cref{prop:approx_guarantee},
\Cref{alg:greedy_feature_balance} retains the worst-case $(1 - 1/k)$
guarantee, while \Cref{rank-one} sharpens this in
the rank-1 regime: taking $\alpha = 25/24$, the worst-case constant of
Chandra and Wong~\cite{chandra1975worst} for the greedy sum-of-squares partition
rule, yields the additive bound
\[
  \mathrm{OPT} - \mathrm{ALG}
  \;\le\; \frac{\alpha - 1}{2\alpha}\,\Phi\!\left(\mathcal{A}^{\mathrm{ALG}}\right)
  \;=\; \frac{1}{50}\,\Phi\!\left(\mathcal{A}^{\mathrm{ALG}}\right).
\]
If, in addition, there exists a perfect feature balanced partition of $[n]$, the improved constant $\alpha = 37/36$ established for ideal sets \cite{goldberg1999tight} sharpens this to
\[
\mathrm{OPT} - \mathrm{ALG} \;\le\; \frac{1}{74}\,\Phi\!\left(\mathcal{A}^{\mathrm{ALG}}\right).
\] 
Crucially, the right-hand side depends only on the objective value of the
returned partition, so it is computable \textit{a posteriori} without any knowledge of
the optimum.

\section{Concluding Remarks}

We studied the Max-$k$-Cut problem from the perspective of node features, where each vertex
$i$ is associated with a feature vector $f_i$ and the edge weights are
generated by inner products, $w_{ij}=f_i^\top f_j$. This representation provides a direct bridge between Max-$k$-Cut and
vector balancing, as we show that the quality of a partition is determined entirely by the
aggregate feature vectors of its parts. On the relaxation side, the same bridge lets a normal-cone argument certify
exactness of the Frieze--Jerrum SDP relaxation directly from the feature structure. Consequently, there is a scope of translating structural or
distribution-specific results for balancing a family of vectors into exactness conditions, approximation guarantees, or
\emph{a posteriori} certificates for the corresponding Max-$k$-Cut instances.

This suggests the following avenues for future work. On the SDP relaxation side, feature
balance and feature dominance are sufficient for exactness but not shown to be
necessary; sharpening the normal-cone certificate into a broader feature-based
characterization of exactness remains open. On the greedy algorithm side, one
direction for research is to extend the rank-1 \emph{a posteriori} certificate, beginning with
$r=2$. Another line of inquiry is to consider
structured feature families, such as sparse incidence vectors or structured
random vectors, where sharper balancing results for the family may translate into stronger Max-$k$-Cut guarantees.

\section*{Declaration of competing interest}
The authors declare that they have no known competing financial interests or
personal relationships that could have appeared to influence the work reported
in this paper.

\section*{Data availability}
No data were used for the research described in this paper.

\bibliographystyle{elsarticle-num}
\bibliography{references}

\end{document}